\newtheorem{theorem}{Theorem}[section]
\newtheorem{definition}[theorem]{Definition}
\newtheorem{lemma} [theorem]{Lemma}
\newtheorem{problem}[theorem]{Problem}
\newtheorem{proposition}[theorem]{Proposition}
\newtheorem{remark}[theorem]{Remark}
\renewcommand{\theequation}{\thesubsection.\arabic{equation}}
\title{\bf On Integer Additive Set-Indexers of Graphs}
\author{{\bf N K Sudev \footnote{Department of Mathematics, Vidya Academy of Science \& Technology, Thalakkottukara, Thrissur - 680501, email: {\em sudevnk@gmail.com}}} and {\bf K A Germina\footnote{Department of Mathematics, School of Mathematical \& Physical Sciences, Central University of Kerala, Kasaragod, email:{\em srgerminaka@gmail.com}}}}
\date{}
\begin{document}
\maketitle

\begin{abstract}
A set-indexer of a graph $G$ is an injective set-valued function $f:V(G) \rightarrow2^{X}$ such that the function $f^{\oplus}:E(G)\rightarrow2^{X}-\{\emptyset\}$ defined by $f^{\oplus}(uv) = f(u ){\oplus} f(v)$ for every $uv{\in} E(G)$ is also injective, where $2^{X}$ is the set of all subsets of $X$ and $\oplus$ is the symmetric difference of sets. An integer additive set-indexer is defined as an injective function $f:V(G)\rightarrow 2^{\mathbb{N}_0}$ such that the induced function $g_f:E(G) \rightarrow 2^{\mathbb{N}_0}$ defined by $g_f (uv) = f(u)+ f(v)$ is also injective. A graph $G$ which admits an IASI is called an IASI graph. An IASI $f$ is said to be a {\em weak IASI} if $|g_f(uv)|=max(|f(u)|,|f(v)|)$ and an IASI $f$ is said to be a {\em strong IASI} if $|g_f(uv)|=|f(u)| |f(v)|$ for all $u,v\in V(G)$.  In this paper, we study about certain characteristics of inter additive set-indexers.
\end{abstract}
\textbf{Key words}: Set-indexers, integer additive set-indexers, uniform integer additive set-indexers, compatible classes, compatible index.\\
\textbf{AMS Subject Classification : 05C78}

\section{Introduction}

For all  terms and definitions, not defined specifically in this paper, we refer to \cite{FH} and for more about graph labeling, we refer to \cite{JAG}. Unless mentioned otherwise, all graphs considered here are simple, finite and have no isolated vertices.

For a $(p,q)$- graph $G=(V,E)$ and a non-empty set $X$ of cardinality $n$, a {\em set-indexer} of $G$ is defined in \cite{A1} as an injective set-valued function $f:V(G) \rightarrow2^{X}$ such that the function $f^{\oplus}:E(G)\rightarrow2^{X}-\{\emptyset\}$ defined by $f^{\oplus}(uv) = f(u ){\oplus} f(v)$ for every $uv{\in} E(G)$ is also injective, where $2^{X}$ is the set of all subsets of $X$ and $\oplus$ is the symmetric difference of sets. 

\begin{theorem}\label{T-ASI1}
\cite{A1} Every graph has a set-indexer.
\end{theorem}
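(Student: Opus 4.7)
The plan is to exhibit an explicit set-indexer on an arbitrary finite graph $G=(V,E)$ with $|V|=p$, taking $X$ to be a $p$-element ground set and using singleton labels.

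First I would enumerate the vertices as $V(G)=\{v_1,v_2,\ldots,v_p\}$ and fix an auxiliary set $X=\{x_1,x_2,\ldots,x_p\}$ of the same cardinality. Then I would define $f:V(G)\to 2^X$ by $f(v_i)=\{x_i\}$. Since the $x_i$ are distinct, this assignment is manifestly injective on vertices.

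Next I would verify that the induced edge map $f^{\oplus}$ satisfies both required properties. For any edge $v_iv_j\in E(G)$ with $i\neq j$, the symmetric difference of disjoint singletons gives $f^{\oplus}(v_iv_j)=\{x_i\}\oplus\{x_j\}=\{x_i,x_j\}$, which is a nonempty $2$-element subset of $X$, so the codomain restriction $f^{\oplus}:E(G)\to 2^X\setminus\{\emptyset\}$ is respected. Injectivity of $f^{\oplus}$ then follows because an edge of a simple graph is uniquely determined by its (unordered) pair of endpoints: if $\{x_i,x_j\}=\{x_k,x_\ell\}$ then $\{v_i,v_j\}=\{v_k,v_\ell\}$, hence the edges coincide.

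There is really no genuine obstacle here; the only thing to be careful about is the nonemptiness condition on $f^{\oplus}$, which is why one prefers singleton labels (so that the two singletons are disjoint and their symmetric difference is their union) rather than, say, arbitrary distinct subsets where the symmetric difference could in principle be empty only if the two subsets were equal, but one still wants a clean argument. The construction also makes clear that $n=|X|=p$ always suffices, foreshadowing the notion of the set-indexing number studied later in the paper.
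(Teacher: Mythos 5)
Your construction is correct and complete: singleton labels make $f$ injective for free, and the symmetric difference of two distinct singletons is the two-element set of endpoints, so injectivity of $f^{\oplus}$ reduces to the fact that an edge of a simple graph is determined by its unordered pair of endpoints. Note, however, that the paper itself offers no proof of this statement --- it is quoted from Acharya's monograph \cite{A1} --- so the only thing to compare against is the paper's proof of the analogous result for IASIs (``Every graph $G$ admits an IASI''). That proof assigns arbitrary distinct nonempty subsets $A_i$ to the vertices and then asserts that ``for suitable choices of the sets $A_i$'' the induced edge map is injective, without exhibiting such a choice; your argument is strictly more concrete, since it names an explicit labeling and verifies both injectivity conditions directly. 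One small point in your closing remarks could be tightened: for \emph{any} injective $f$ the symmetric difference of two distinct set-labels is automatically nonempty, so nonemptiness is never the issue --- the real reason to prefer singletons is that they make injectivity of $f^{\oplus}$ transparent, whereas for arbitrary distinct subsets two different edges can easily receive the same symmetric difference (e.g.\ $\{1\}\oplus\{1,2\}=\{2\}=\{3\}\oplus\{2,3\}$).
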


Let $\mathbb{N}_0$ denote the set of all non-negative integers. For all $A, B \subseteq \mathbb{N}_0$, the sum of these sets is denoted by  $A+B$ and is defined by $A + B = \{a+b: a \in A, b \in B\}$. The set $A+B$ is called the {\em sumset} of the sets $A$ anb $B$. Using the concept of sumsets, an integer additive set-indexer is defined as follows.

\begin{definition}\label{D2}{\rm
\cite{GA} An {\em integer additive set-indexer} (IASI, in short) is defined as an injective function $f:V(G)\rightarrow 2^{\mathbb{N}_0}$ such that the induced function $g_f:E(G) \rightarrow 2^{\mathbb{N}_0}$ defined by $g_f (uv) = f(u)+ f(v)$ is also injective}. A graph $G$ which admits an IASI is called an IASI graph.
\end{definition}

\begin{definition}\label{D3}{\rm
\cite{GS1} The cardinality of the labeling set of an element (vertex or edge) of a graph $G$ is called the {\em set-indexing number} of that element.} 
\end{definition}

\begin{definition}\label{DU}{\rm
\cite{GA} An IASI is said to be {\em $k$-uniform} if $|g_f(e)| = k$ for all $e\in E(G)$. That is, a connected graph $G$ is said to have a $k$-uniform IASI if all of its edges have the same set-indexing number $k$.}
\end{definition}

\begin{definition}\label{D4}{\rm
An element (a vertex or an edge) of graph G, which has the set-indexing number 1, is called a {\em mono-
indexed element} of that graph.}
\end{definition}

In particular, we say that a graph $G$ has an {\em arbitrarily $k$-uniform IASI} if $G$ has a $k$-uniform IASI  for every positive integer $k$.

In \cite{GS2}, the vertex set $V$ of a graph $G$ is defined to be {\em $l$-uniformly set-indexed}, if all the vertices of $G$ have the set-indexing number $l$.

In this paper, we intend to introduce some fundamental notions on IASIs and establish some useful results. 

\section{Integer Additive Set-Indexers of Graphs}

Analogous to Theorem \ref{T-ASI1} we prove the following theorem.

\begin{theorem}
Every graph $G$ admits an IASI.
\end{theorem}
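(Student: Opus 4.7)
The plan is to give an explicit construction using singleton labels drawn from a set whose pairwise sums are all distinct, so that both $f$ and the induced $g_f$ are automatically injective.

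First, I would let $V(G)=\{v_1,v_2,\dots,v_n\}$ be an enumeration of the vertex set and define $f:V(G)\to 2^{\mathbb{N}_0}$ by $f(v_i)=\{2^{i-1}\}$. Since the singletons $\{1\},\{2\},\{4\},\dots,\{2^{n-1}\}$ are pairwise distinct, $f$ is injective, so the vertex-labeling condition of Definition \ref{D2} is immediate.

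Next, I would verify that the induced map $g_f$ is injective on $E(G)$. For any edge $v_iv_j$ with $i<j$, we have $g_f(v_iv_j)=f(v_i)+f(v_j)=\{2^{i-1}+2^{j-1}\}$, again a singleton. The key step is to observe that the assignment $\{i,j\}\mapsto 2^{i-1}+2^{j-1}$ is injective on unordered pairs: this follows from the uniqueness of binary representations of positive integers, since the sum $2^{i-1}+2^{j-1}$ records precisely the two-element subset $\{i-1,j-1\}$ of exponents. Consequently, distinct edges of $G$ (which correspond to distinct unordered pairs of indices) receive distinct labels under $g_f$, and injectivity of $g_f$ holds a fortiori when restricted to $E(G)$.

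There is essentially no obstacle here, since the only requirements are injectivity of $f$ and of $g_f$, and both are guaranteed by choosing the labels from a Sidon-type set. The crux worth stating cleanly is the distinctness of the sums $2^{i-1}+2^{j-1}$; everything else is bookkeeping. One could alternatively phrase the construction with any finite $B_2$-set in $\mathbb{N}_0$, but the powers-of-two choice makes the verification entirely elementary and keeps the proof as short as the statement warrants.
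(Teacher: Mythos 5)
Your proof is correct, and it is in fact more complete than the paper's own argument. The paper labels the vertices with arbitrary distinct non-empty subsets $A_1,\dots,A_n$ of $\mathbb{N}_0$ and then simply asserts that ``for suitable choices of the sets $A_i$'' the induced map $g_f$ is injective, without exhibiting such a choice; your construction supplies exactly the missing detail. Taking $f(v_i)=\{2^{i-1}\}$ makes $f$ injective on sight, and the uniqueness of binary representations shows that the map $\{i,j\}\mapsto 2^{i-1}+2^{j-1}$ is injective on two-element index sets, so $g_f$ assigns distinct singletons to distinct edges. Since Definition \ref{D2} only demands injectivity of $f$ and of $g_f$ separately (there is no requirement that vertex and edge labels avoid one another), nothing further needs checking. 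The only substantive difference between the two approaches is that yours is fully constructive, using a Sidon-type set where the paper appeals to an unspecified choice; your version is the one a careful referee would want to see.
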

\begin{proof}
Let $G$ be a graph with vertex set $V(G)=\{v_1,v_2,\ldots, v_n\}$. Let $A_1, A_2,\ldots,A_n$ be distinct non-empty subsets of $\mathbb{N}_0$. Let $f:V(G)\to 2^{\mathbb{N}_0}$ defined by $f(v_i)=A_i$. We observe that $f$ is injective. Define $g_f:E(G)\to 2^{\mathbb{N}_0}$ as $g_f(v_iv_j)=\{a_i+b_j:a_i\in A_i, b_j\in A_j\}$. Clearly, $g_f(v_iv_j)$ is a set of non-negative integers. For suitable choices of the sets $A_i$, we observe that $g_f$ is injective. Hence, $f$ is an IASI on $G$.
\end{proof}

Figure \ref{G-IASIG} depicts an IASI graph.

\begin{figure}[h!]
\centering
\includegraphics[scale=0.5]{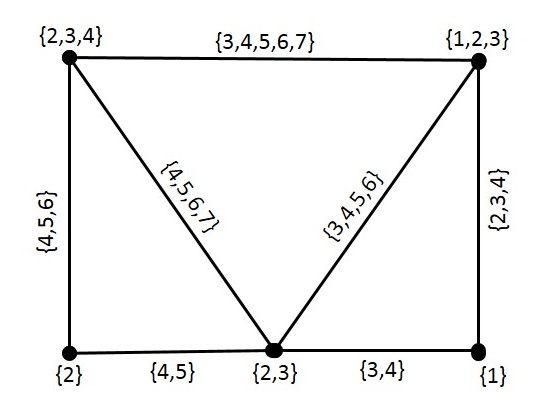}
\caption{}\label{G-IASIG}
\end{figure}

The following results verify the admissibility of IASIs by certain graphs that are associated with a given graph $G$.

\begin{theorem}\label{T-IASI-S}
A subgraph $H$ of an IASI graph $G$ also admits an IASI. 
\end{theorem}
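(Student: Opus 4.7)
The plan is to use a simple restriction argument, since the IASI property is inherited by taking induced structures. Specifically, if $f : V(G) \to 2^{\mathbb{N}_0}$ is an IASI of $G$, I would define the candidate labeling $f_H : V(H) \to 2^{\mathbb{N}_0}$ on the subgraph $H$ to be the restriction $f_H := f|_{V(H)}$.

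The verification then splits into two steps. First, I would check that $f_H$ is injective: this is immediate, because $V(H) \subseteq V(G)$ and $f$ is already injective on the larger set $V(G)$. Second, I would check that the induced edge function $g_{f_H} : E(H) \to 2^{\mathbb{N}_0}$, defined by $g_{f_H}(uv) = f_H(u) + f_H(v) = f(u) + f(v)$, is injective on $E(H)$. Since every edge of $H$ is also an edge of $G$, the map $g_{f_H}$ is precisely the restriction of $g_f$ to $E(H) \subseteq E(G)$, and the restriction of an injective function remains injective.

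Combining these two observations, $f_H$ satisfies both the vertex injectivity and the edge injectivity required by Definition \ref{D2}, hence it is an IASI of $H$.

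There is no genuine obstacle here; the argument is essentially bookkeeping, relying only on the fact that both $V(H) \subseteq V(G)$ and $E(H) \subseteq E(G)$ and that injectivity is preserved under restriction. The only minor point to keep in mind is the standing convention in the paper that graphs have no isolated vertices, but this is unaffected by the restriction procedure whenever $H$ itself is taken to satisfy the same convention.
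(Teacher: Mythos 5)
Your proposal is correct and follows exactly the same route as the paper: restrict $f$ to $V(H)$, note that $g_{f|_H}$ is the restriction of $g_f$ to $E(H)$, and observe that injectivity is inherited by restrictions. Your write-up is in fact slightly more explicit than the paper's own proof about why both restricted maps remain injective.
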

\begin{proof}
Let $G$ be a graph which admits an IASI, say $f$. Let $f|_H$ be the restriction of $f$ to $V(H)$. Then, $g_{f|_H}$ is the corresponding restriction of $g_f$ to $E(H)$. Then, $f|_H$ is an IASI on $H$. Therefore, $H$ is also an IASI graph.
\end{proof}

We call the IASI $f|_H$, defined in Theorem \ref{T-IASI-S} as the {\em induced IASI} of the subgraph $H$ of $G$.

By {\em edge contraction operation} in $G$, we mean an edge, say $e$, is removed and its two incident vertices, $u$ and $v$, are merged into a new vertex $w$, where the edges incident to $w$ each correspond to an edge incident to either $u$ or $v$. We establish the following theorem for the graphs obtained by contracting the edges of a given graph $G$. The following theorem establishes the admissibility of the graphs obtained by contracting the edges of a given IASI graph $G$.

\begin{theorem}
Let $G$ be an IASI graph and let $e$ be an edge of $G$. Then, $G\circ e$ admits an IASI.
\end{theorem}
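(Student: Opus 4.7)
The plan is to reuse the given IASI $f$ on $G$ unchanged at every vertex that survives the contraction, and to assign the newly created vertex a carefully chosen singleton label. Writing $e=uv$ and letting $w$ denote the vertex of $G':=G\circ e$ obtained by identifying $u$ and $v$, I would define $f':V(G')\to 2^{\mathbb{N}_0}$ by $f'(x)=f(x)$ for every $x\neq w$, and $f'(w)=\{N\}$ for a sufficiently large positive integer $N$ to be fixed in a moment.

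Two things must be checked: that $f'$ is injective on $V(G')$ and that the induced map $g_{f'}$ is injective on $E(G')$. For vertices, the only possible collision is between $\{N\}$ and some $f(x)$, and choosing $N$ strictly larger than every integer occurring in any $f(z)$ excludes this. For edges, I would partition $E(G')$ into the set $E_1$ of edges not incident with $w$ and the set $E_2$ of edges incident with $w$. On $E_1$, each edge corresponds to an edge of $G$ disjoint from $\{u,v\}$, and on these $g_{f'}$ agrees with $g_f$, so injectivity is inherited from the IASI property of $G$. On $E_2$, two distinct edges $wx$ and $wy$ receive sumsets $N+f(x)$ and $N+f(y)$, which are distinct because translation by $N$ is a bijection and $f(x)\neq f(y)$.

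The principal point requiring care is separating $E_1$ from $E_2$, and this is where the size of $N$ is used: by taking $N$ larger than $2\max_{z\in V(G)}\max f(z)$, the smallest element of any $N+f(x)$ exceeds the largest element of any $f(a)+f(b)$ with $ab\in E_1$, so the corresponding sumsets are even disjoint, hence certainly distinct. A harmless subtlety worth mentioning is that a common neighbour of $u$ and $v$ yields only one edge to $w$ in the simple graph $G'$; since injectivity of $g_{f'}$ is required only on $E(G')$, the collapse of parallel edges into a single edge causes no difficulty, and $f'$ is an IASI on $G\circ e$.
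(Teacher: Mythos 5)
Your proof is correct and takes a genuinely different route from the paper's. The paper labels the merged vertex $w$ with the set-label of the contracted edge, namely $f(u)+f(v)$, and then simply asserts that the result is an IASI without checking either vertex injectivity (the sumset $f(u)+f(v)$ could coincide with some existing $f(x)$) or edge injectivity (sumsets do not cancel: $A+B=A+C$ is possible with $B\neq C$, so two edges at $w$ could in principle receive the same label); in that sense the paper's argument is really only a labeling recipe. You instead give $w$ a large singleton $\{N\}$, which buys two things: translation by $N$ is injective on sets, so the labels $N+f(x)$ of the edges at $w$ are automatically distinct, and taking $N$ beyond twice the largest element in any vertex label separates these from all surviving old edge labels. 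Your observation about a common neighbour of $u$ and $v$ collapsing two edges into one is also handled correctly. The only caveat worth flagging is that your choice of $N$ presupposes the vertex labels are finite (or at least bounded) subsets of $\mathbb{N}_0$; the definition of an IASI as a map into $2^{\mathbb{N}_0}$ does not literally force this, though it is the standing convention throughout this line of work (all the cardinality results in the paper tacitly assume it), so this is a harmless restriction rather than a gap.
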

\begin{proof}
Let $G$ admits a weak IASI. $G\circ e$ is the graph obtained from $G$ by deleting an edge $e$ of $G$ and identifying the end vertices of $e$. Label the new vertex thus obtained, say $w$, by the set-label of the deleted edge. Then, each edge incident upon $w$ has a set-label containing non-negative integers. Hence, $G\circ e$ is an IASI graph. 
\end{proof}

\begin{definition}{\rm
\cite{KDJ} Let $G$ be a connected graph and let $v$ be a vertex of $G$ with $d(v)=2$. Then, $v$ is adjacent to two vertices $u$ and $w$ in $G$. If $u$ and $v$ are non-adjacent vertices in $G$, then delete $v$ from $G$ and add the edge $uw$ to $G-\{v\}$. This operation is known as an {\em elementary topological reduction} on $G$.}
\end{definition}

\begin{theorem}\label{T-IASI-TRG}
Let $G$ be a graph which admits an IASI. Then any graph $G'$, obtained by applying finite number of elementary topological reductions on $G$, admits an IASI. 
\end{theorem}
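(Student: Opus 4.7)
The plan is to reduce to the single-reduction case by a straightforward induction on the number of elementary topological reductions applied, since any graph obtained by finitely many such operations can be reached by one operation from a graph already shown to admit an IASI. So it suffices to fix an IASI graph $G$ with IASI $f$, a vertex $v$ of degree $2$ whose neighbors $u,w$ are non-adjacent in $G$, and show that $G' = (G - v) + uw$ admits an IASI.

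For the single-reduction step, my proposal is to transport $f$ to $G'$ essentially unchanged. Concretely, define $f' : V(G') \to 2^{\mathbb{N}_0}$ by $f'(x) = f(x)$ for every $x \in V(G) \setminus \{v\} = V(G')$. Injectivity of $f'$ on $V(G')$ is immediate from the injectivity of $f$. The induced edge function $g_{f'}$ agrees with $g_f$ on every edge of $G'$ except the new edge $uw$, so the restriction of $g_{f'}$ to $E(G') \setminus \{uw\}$ is injective because $g_f$ was injective on $E(G) \setminus \{uv, vw\}$. On the new edge we must set $g_{f'}(uw) = f(u) + f(w)$, and the entire content of the theorem is then that the resulting $g_{f'}$ remains injective over all of $E(G')$.

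The main obstacle is therefore ruling out collisions of the form $f(u) + f(w) = g_{f'}(e')$ for some other edge $e' \in E(G') \setminus \{uw\}$. A priori this can fail for the specific $f$ given. My plan to handle this, in the informal style of the preceding theorems in the paper, is to observe that we retain complete freedom to modify $f(u)$ (and, if necessary, $f(w)$) by adjoining a sufficiently large non-negative integer $N$ that exceeds every integer appearing in any existing vertex-label. Replacing $f(u)$ by $f(u) \cup \{N\}$ preserves injectivity of the vertex labeling, preserves injectivity of $g_f$ on the edges of $G - v$ (as the new sumset at each edge incident to $u$ is obtained from the old one by adjoining a block of integers strictly larger than anything previously used), and shifts the maximum element of $f(u) + f(w)$ beyond that of $g_{f'}(e')$ for every other edge, breaking any collision. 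Iterating this choice finitely many times, if needed, yields an IASI $f'$ of $G'$.

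Finally, the inductive step just applies the single-reduction argument to each successive reduction, so after the prescribed finite sequence of elementary topological reductions one obtains an IASI on $G'$, completing the proof. The only delicate point is the collision-avoidance argument for the new edge $uw$, which I expect to handle by the enlargement of one endpoint label as sketched above rather than by any deeper structural result.
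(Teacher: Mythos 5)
Your proposal follows the same core construction as the paper: keep the vertex labels unchanged on $V(G')=V(G)\setminus\{v\}$, let the old edges keep their old sumset labels, and label the new edge $uw$ by $f(u)+f(w)$; the reduction to a single elementary topological reduction by induction is also implicit in the paper. Where you differ is that you explicitly identify and repair the one real issue, namely that $f(u)+f(w)$ might coincide with the sumset label of some other edge of $G'$ --- the paper's own proof simply declares $f'$ to be an IASI without checking injectivity of $g_{f'}$ at the new edge, so your collision-avoidance step is a genuine improvement rather than a detour. Your repair (adjoin a large integer $N$ to $f(u)$) does work, with one small quantitative correction: choosing $N$ merely larger than every integer appearing in a \emph{vertex} label is not quite enough, because existing \emph{edge} labels contain sums as large as twice that maximum, so an element $N+b$ with $b=0$ could still land inside an old edge label. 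Taking $N$ strictly greater than twice the largest integer occurring in any vertex label makes the adjoined block $\{N\}+f(x)$ disjoint from every pre-existing sumset, after which your argument goes through: vertex injectivity is preserved since $N$ occurs in no other label, edges incident to $u$ are distinguished from one another by the high parts $\{N\}+f(x)$ (distinct because $f$ is injective) and from all other edges by containing elements exceeding every old sum, and the new edge $uw$ is likewise separated from everything else. With that adjustment your proof is complete and is in fact more careful than the one in the paper.
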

\begin{proof}
Let $G$ be a graph which admits an IASI, say $f$. Let $v$ be a vertex of $G$ with $d(v)=2$. Then $v$ is adjacent two non adjacent vertices $u$ and $w$ in $G$. Now remove the vertex $v$ from $G$ and introduce the edge $uw$ to $G-{v}$. Let $G'=(G-{v})\cup \{uw\}$. Now $V(G')=V(G-v)\subset V(G)$. Let $f':V(G')\to 2^{\mathbb{N}_0}$ such that $f'(v)=f(v)~ \forall ~v\in V(G')(or V(G-v))$ and the associated function $g_{f'}:E(G')\to 2^{\mathbb{N}_0}$ and defined by 
\[ g_{f'}(e)= \left\{
\begin{array}{l l}	
g_f(e)& \quad \text{if $e\ne uw$}\\
f(u)+f(w)& \quad \text{if $e=uw$}
\end{array} \right.\]
Hence, $f'$ is an IASI of $G'$. 
\end{proof}

Another associated graph of $G$ is the line graph of $G$.  First, we have the definition of the line graph of a given graph $G$ and the admissibility of IASI by the line graph of $G$ is followed.

\begin{definition}{\em 
\cite{DBW} For a given graph $G$, its {\em line graph} $L(G)$ is a graph such that  each vertex of $L(G)$ represents an edge of $G$ and two vertices of $L(G)$ are adjacent if and only if their corresponding edges in $G$ incident on a common vertex in $G$.}
\end{definition}

\begin{theorem}\label{T-IASI-LG}
If $G$ is an IASI graph, then its line graph $L(G)$ is also an IASI graph.
\end{theorem}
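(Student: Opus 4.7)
The plan is to transfer the IASI from $G$ to $L(G)$ directly, exploiting the fact that each vertex of $L(G)$ is, by definition, an edge of $G$. Given an IASI $f$ on $G$, I would define $h:V(L(G))\to 2^{\mathbb{N}_0}$ by $h(e):=g_f(e)=f(u)+f(v)$ for each edge $e=uv$ of $G$. Injectivity of $h$ on $V(L(G))$ is immediate from injectivity of $g_f$ on $E(G)$, so the vertex-labeling requirement for an IASI is satisfied for free, and $h(e)\subseteq\mathbb{N}_0$ because sumsets of subsets of $\mathbb{N}_0$ remain in $\mathbb{N}_0$.

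What remains is to verify that the induced edge map $g_h:E(L(G))\to 2^{\mathbb{N}_0}$ is injective. Two adjacent vertices of $L(G)$ correspond to two edges of $G$ meeting at a common endpoint, say $e_1=au$ and $e_2=ub$, and expanding sumsets gives
\[
g_h(e_1e_2)=h(e_1)+h(e_2)=f(a)+2f(u)+f(b).
\]
The task then reduces to showing that whenever another edge of $L(G)$, arising from a pair $e_3=cv, e_4=vd$ sharing apex $v$, produces the same sumset, the two pairs of edges of $G$ must actually coincide.

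The main obstacle is that this last uniqueness statement is not forced by the IASI hypothesis on $G$ alone: one can construct labelings where the expressions $f(a)+2f(u)+f(b)$ and $f(c)+2f(v)+f(d)$ collide accidentally. The cleanest escape, fully in the spirit of the paper, is to invoke the earlier theorem stating that every graph admits an IASI and apply it directly to the graph $L(G)$. A more constructive alternative, should one wish to exhibit a labeling derived from $f$, is to replace each $f(v)$ with $M\cdot f(v)+c_v$ for a sufficiently large integer $M$ and distinct vertex-dependent shifts $c_v$; choosing $M$ larger than the maximum element occurring in any $f(v)$ separates the contributions $f(a)$, $2f(u)$ and $f(b)$ into disjoint ``digit blocks'' of the resulting sumset, which forces unique decodability and hence injectivity of $g_h$. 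Either route yields that $L(G)$ is an IASI graph.
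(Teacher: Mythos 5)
Your opening move is identical to the paper's: label the vertex of $L(G)$ that corresponds to an edge $e$ of $G$ by $g_f(e)$, and observe that injectivity of this vertex labeling is inherited from injectivity of $g_f$. Where you diverge is in what you do about the induced edge map on $L(G)$. The paper simply asserts that $g_{f'}$ is injective ``since each $f'(u_i)$ is a set of non-negative integers,'' which is a non sequitur, and you are right that injectivity can genuinely fail: label the star $K_{1,4}$ with centre $u$ and leaves $a,b,c,d$ by $f(u)=\{0\}$, $f(a)=\{1\}$, $f(b)=\{2\}$, $f(c)=\{3\}$, $f(d)=\{4\}$; this is an IASI on $K_{1,4}$, yet in $L(K_{1,4})=K_4$ the transferred vertex labels $\{1\},\{2\},\{3\},\{4\}$ give the edge joining $\{1\}$ and $\{4\}$ and the edge joining $\{2\}$ and $\{3\}$ the same set-label $\{5\}$. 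So you have identified a genuine gap in the paper's own argument rather than reproduced it. Your first repair --- apply the earlier theorem that every graph admits an IASI directly to $L(G)$ --- does prove the statement as literally phrased, at the price of discarding any relation between the labeling of $L(G)$ and the given $f$; note, though, that that theorem's proof in the paper itself hides behind ``for suitable choices of the sets $A_i$,'' so to be self-contained you should exhibit such a choice (e.g.\ $A_i=\{4^i\}$, which makes every vertex and edge label a distinct singleton). Your second, constructive repair is the more interesting route but is underspecified as written: taking the shifts $c_v$ merely distinct does not prevent $c_a+2c_u+c_b=c_c+2c_v+c_d$ (for instance $1+2\cdot 4+5=2+2\cdot 3+6$), so the rescaled labels can still collide; you need the shifts to satisfy a Sidon-type condition for the form $x+2y+z$, say $c_{v_i}=10^i$ together with $M>4\max_i c_{v_i}$, after which reading any element of an edge label modulo $M$ recovers the underlying pair of edges of $G$ and injectivity of $g_h$ follows.
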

\begin{proof}
Since $G$ admits an IASI, say $f$, both $f$ and $g_f$ are injective functions. Let $E(G)=\{e_1,e_2,e_3,\ldots,e_n\}$. For $1\le i \le n$, let $u_i$, be the vertex in $V(L(G))$ corresponding to the edge $e_i$ in $G$. Define $f':V(L(G))\to 2^{\mathbb{N}_0}$ by $f'(u_i)=g_f(e_i), 1\le i \le n$. Clearly, $f'$ is injective. Since each $f'(u_i)$ is a set of non-negative integers, the associated function $g_{f'}:E(L(G))\to 2^{\mathbb{N}_0}$, defined by $g_{f'}(u_iu_j)=f'(u_i)+f'(u_j)$, is also injective and each $g_{f'}(u_iu_j)$ is a set of non-negative integers. Therefore, $f'$ is an IASI of $L(G)$.
\end{proof}

Figure \ref{G-IASIG-LG} depicts the admissibility of an IASI by the line graph $L(G)$ of a weakly IASI graph $G$.

\begin{figure}[h!]
\centering
\includegraphics[scale=0.35]{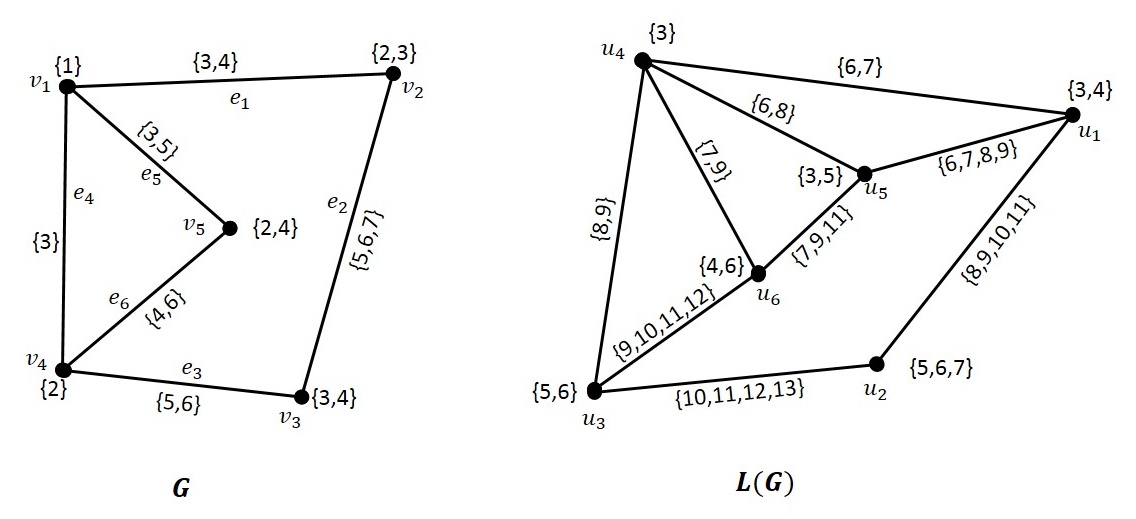}
\caption{}\label{G-IASIG-LG}
\end{figure}

Now we recall the notion of the total graph of a given graph $G$ and hence verify the admissibility of IASI by the total graph of an IASI graph.

\begin{definition}{\rm
\cite{MB} The {\em total graph} of a graph $G$ is the graph, denoted by $T(G)$, is the graph having the property that a one-to one correspondence can be defined between its points and the elements (vertices and edges) of $G$ such that two points of $T(G)$ are adjacent if and only if the corresponding elements of $G$ are adjacent (either  if both elements are edges or if both elements are vertices) or they are incident (if one element is an edge and the other is a vertex). }
\end{definition} 

\begin{theorem}\label{T-IASI-TG}
If $G$ is an IASI graph, then its total graph $T(G)$ is also an IASI graph.
\end{theorem}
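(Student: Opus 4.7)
The natural plan is to transfer the labelling from $G$ to $T(G)$ by exploiting the bijection between $V(T(G))$ and $V(G)\cup E(G)$. Given the IASI $f$ on $G$ with induced edge-function $g_f$, define $f':V(T(G))\to 2^{\mathbb{N}_0}$ by $f'(x)=f(x)$ when $x$ corresponds to a vertex of $G$ and $f'(x)=g_f(e)$ when $x$ corresponds to an edge $e$ of $G$. The rest of the argument then amounts to verifying that $f'$ is injective and that the induced function $g_{f'}$ on $E(T(G))$ is injective as well.

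Injectivity of $f'$ follows because $f$ is injective on $V(G)$, $g_f$ is injective on $E(G)$, and, by a suitable initial choice of the labelling sets (for instance, by shifting all sets assigned by $f$ by a sufficiently large constant, which preserves the IASI property), we may assume that no vertex-label $f(v)$ coincides with any edge-sumset $g_f(e)$. This step is routine and in the informal spirit of Theorem \ref{T-IASI-LG}, where the same kind of relabelling idea was used.

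The core of the proof is the injectivity of $g_{f'}$. The edges of $T(G)$ split naturally into three classes: (i) edges $uv$ of $G$, giving sums $f(u)+f(v)=g_f(uv)$, which are mutually distinct because $g_f$ is injective on $E(G)$; (ii) incidence-edges joining a vertex $v$ of $G$ to an edge $e$ of $G$ with $v\in e$, giving sums $f(v)+g_f(e)$; (iii) adjacency-edges joining two edges $e,e'$ of $G$ sharing a common endpoint, giving sums $g_f(e)+g_f(e')$. Within each class and across the three classes the sumsets can be made pairwise distinct provided the original sets $f(v)$ are chosen so that their elements lie in sufficiently separated ranges of $\mathbb{N}_0$; this guarantees that the maximum (or minimum) element of a sumset already identifies which class and which pair of labels it came from.

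I expect the main obstacle to lie precisely in Step (iii), since $g_f(e)+g_f(e')=f(u)+f(v)+f(u')+f(v')$ may in principle collide with sums from classes (i) or (ii), or with another sum of the same type. To handle this cleanly I would, at the outset, replace the given labelling by the translated labelling $v\mapsto f(v)+\{N\}$ for a large constant $N$, so that the three classes of sumsets sit in disjoint residue regions modulo a suitable integer, making the required injectivity immediate. With this preparation the remainder of the verification reduces to observations already recorded in Theorems \ref{T-IASI-S} and \ref{T-IASI-LG}.
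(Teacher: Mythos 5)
Your proposal follows the same route as the paper: transfer the set-labels of the vertices and edges of $G$ to the corresponding vertices of $T(G)$ via the natural bijection, and verify that the resulting $f'$ and the induced $g_{f'}$ are injective. You are in fact more careful than the paper, whose proof simply asserts both injectivity claims; your observations that a vertex-label $f(v)$ could coincide with an edge-sumset $g_f(e)$, and that the three classes of edges of $T(G)$ could produce colliding sumsets unless the labels are re-chosen (e.g.\ with sufficiently separated ranges), identify and patch real gaps that the paper's own argument leaves open.
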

\begin{proof}
Since $G$ admits an IASI, say $f$, by the definition of an IASI $f(v),~\forall~ v\in V(G)$ and $g_f(e), ~\forall~ e\in E(G)$ are sets of non-negative integers. Define a map $f':V(T(G))\to 2^{\mathbb{N}_0}$ which assigns the same set-labels of the corresponding elements in $G$ under $f$ to the vertices of $T(G)$. Clearly, $f'$ is injective and each $f'(u_i),~u_i\in V(T(G))$ is a set of non-negative integers. Now, define the associated function $g_f:E(T(G))\to 2^{\mathbb{N}_0}$ defined by $g_{f'}(u_iu_j)= f'(u_i)+f'(u_j),~ u_i,u_j\in V(T(G))$. Then, $g_{f'}$ is injective and each $g_{f'}(u_iu_j)$ is a set of non-negative integers. Therefore, $f'$ is an IASI of $T(G)$. This completes the proof.
\end{proof}

Figure \ref{G-IASIG-TG} illustrates the admissibility of an IASI by the the total graph $T(G)$ of a weakly IASI graph $G$.

\begin{figure}[h!]
\centering
\includegraphics[scale=0.5]{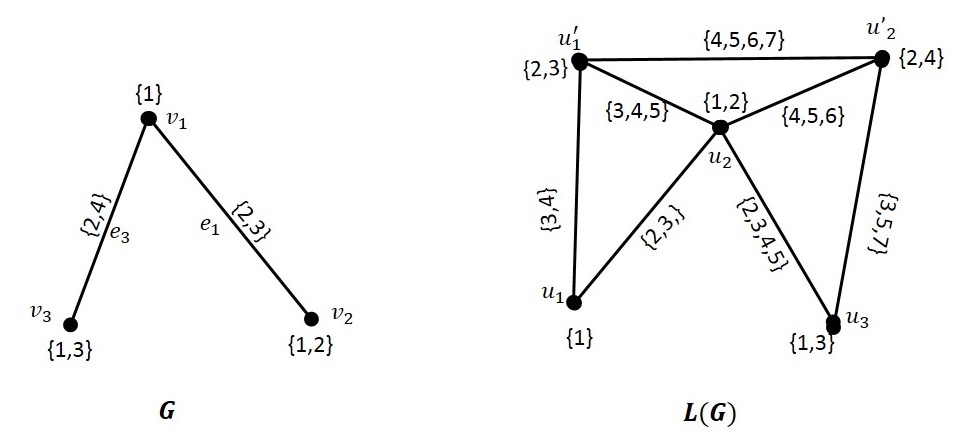}
\caption{}\label{G-IASIG-TG}
\end{figure}

\section{Some Certain Types of IASIs}

\subsection{Cardinality of the Set-Labels of Graphs}

To determine the set-indexing number of an edge of a graph $G$, if the set-indexing numbers of its end vertices are given, is an interesting problem.  All the sets mentioned in this paper are sets of non-negative integers. We denote the cardinality of a set $A$ by $|A|$. Then, we recall the following lemma.

\begin{lemma}\label{L-0}
\cite{GS1} If $A, B\subseteq \mathbb{N}_0$, then $max(|A|,|B|)\le |A+B|\le |A|~|B|$.
\end{lemma}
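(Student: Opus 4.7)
My plan is to prove the two inequalities separately by elementary counting arguments, since both bounds come from very natural maps associated with the sumset.

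For the upper bound $|A+B| \le |A|\,|B|$, I would observe that the sumset $A+B$ is by definition the image of the addition map $\sigma:A\times B\to\mathbb{N}_0$ given by $\sigma(a,b)=a+b$. Since the image of any function has cardinality at most that of its domain, $|A+B|\le|A\times B|=|A|\,|B|$. I would also mention that equality need not hold because distinct pairs $(a,b)$ and $(a',b')$ can yield the same sum $a+b=a'+b'$.

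For the lower bound $\max(|A|,|B|)\le|A+B|$, I would argue by symmetry and without loss of generality assume $|A|\ge|B|$, so that $\max(|A|,|B|)=|A|$. Fix an arbitrary element $b_0\in B$ and consider the translate $A+\{b_0\}=\{a+b_0:a\in A\}$. The map $a\mapsto a+b_0$ is injective on $\mathbb{N}_0$ (translations are bijections), so $|A+\{b_0\}|=|A|$. Since $A+\{b_0\}\subseteq A+B$, it follows that $|A+B|\ge|A+\{b_0\}|=|A|=\max(|A|,|B|)$.

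There is no real obstacle here: both directions reduce to the fact that addition by a fixed element is injective (for the lower bound) and that sumsets are images of the product under addition (for the upper bound). The only point deserving care is handling the case where $B$ could be empty, but since the lemma is invoked throughout for set-labels of vertices, which by the definition of IASI are non-empty, I would simply assume $A$ and $B$ are non-empty so that the element $b_0$ exists. This keeps the proof to a few lines and matches the level of detail of surrounding results.
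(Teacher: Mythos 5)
Your proof is correct, but it takes a genuinely different route from the paper's. The paper deliberately restates this lemma (as Lemma \ref{L-1}) precisely in order to give an ``alternate proof'' built on its compatibility-class machinery: writing $|f(u)|=m$ and $|f(v)|=n$, it invokes Theorem \ref{T-Card} to get $|g_f(uv)|=mn-r$, where $r$ is the neglecting number, so the upper bound follows from $r\ge 0$; for the lower bound it supposes $|g_f(uv)|=m$ and runs a divisibility argument ($n\mid r$ and $(n-1)\mid r$, forcing $r=0$ and $n=1$) that is really aimed at characterising when the value $\max(|A|,|B|)$ is attained rather than at cleanly deriving the inequality $\max(|A|,|B|)\le |A+B|$ itself. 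Your argument bypasses all of that: the upper bound is immediate because $A+B$ is the image of $A\times B$ under the addition map, and the lower bound follows from the injectivity of the translation $a\mapsto a+b_0$ together with the inclusion $A+\{b_0\}\subseteq A+B$. Your version is shorter, self-contained (it does not depend on Theorem \ref{T-Card} or the compatibility-class apparatus), and arguably more watertight on the lower bound. What the paper's approach buys is the quantitative refinement $|A+B|=|A|\,|B|-r$, which ties the lemma to the compatibility-index framework used elsewhere in the paper; what yours buys is a direct elementary proof of exactly the stated inequality. Your caveat about non-emptiness is also the right one to flag, since for $B=\emptyset$ and $A\ne\emptyset$ the lower bound as literally stated would fail.
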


Let $f$ be an IASI defined on $G$ and let $u$ and $v$ be two adjacent vertices of $G$ labeled by two non-empty sets $A$ and $B$ respectively. Then, the set-label for the edge $uv$ in $G$ is $A+B$. That is, $f(u)=A$ and $f(v)=B$ and $g_f(uv)=A+B$. 

\begin{definition}\label{D-CP}{\rm 
Two ordered pairs $(a,b)$ and $(c,d)$ in $A\times B$ {\em compatible} if $a+b=c+d$. If $(a,b)$ and $(c,d)$ are compatible, then we write $(a,b)\sim (c,d)$}.
\end{definition}

\begin{definition}\label{D-CC}{\rm 
A {\em compatible class} of an ordered pair $(a,b)$ in $A\times B$ with respect to the integer $k=a+b$ is the subset of $A\times B$ defined by $\{(c,d)\in A\times B:(a,b)\sim (c,d)\}$ and is denoted by $[(a,b)]_k$ or $\mathsf{C}_k$.}
\end{definition}

From Definition \ref{D-CC}, we note that $(a,b) \in [(a,b)]_k$, each compatibility classes $[(a,b)]_k$ is non-empty. That is, the minimum number of elements in a compatibility class is $1$. If a compatibility class contains exactly one element, then it is called a {\em trivial class}.

All the compatibility class need not have the same number of elements but can not exceed a certain number. The compatibility classes which contain the maximum possible number of elements are called {\em saturated classes}. The compatibility class that contains maximum elements is called a {\em maximal compatibility class}. Hence, it can be observed that all saturated classes are maximal compatibility classes, but a maximal compatibility class need not be a saturated class. 

The following result provide the maximimum possible number of elements in a saturated class.

\begin{proposition}\label{P-CardCC}
The cardinality of a saturated class in $A\times B$ is $n$, where $n=min(|A|,|B|)$. 
\end{proposition}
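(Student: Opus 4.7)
The plan is to bound the size of an arbitrary compatibility class from above by $n$ and then exhibit sets $A,B$ and an integer $k$ achieving the bound, which together establish that the saturated (absolute maximum) value is exactly $n$.

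For the upper bound, I would fix an ordered pair $(a,b)\in A\times B$ with $a+b=k$ and rewrite the compatibility class as
\[
[(a,b)]_k=\{(c,d)\in A\times B:c+d=k\}=\{(c,k-c):c\in A\text{ and }k-c\in B\}.
\]
The first coordinate projection $(c,d)\mapsto c$ is then clearly injective on $[(a,b)]_k$ (since the second coordinate is forced to be $k-c$), which gives $|[(a,b)]_k|\le |A|$. By the symmetric projection onto the second coordinate, $|[(a,b)]_k|\le |B|$. Combining, $|[(a,b)]_k|\le \min(|A|,|B|)=n$, which holds for every compatibility class in $A\times B$ regardless of the choice of $A$ and $B$. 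This is the easy half.

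For the lower bound, I would exhibit sets realising $n$. Without loss of generality, assume $|A|\le |B|$, so $n=|A|$. Take for instance the arithmetic progression choice $A=\{0,1,\ldots,n-1\}$ and $B$ any set of cardinality $|B|$ containing $\{0,1,\ldots,n-1\}$ (for example $B=\{0,1,\ldots,|B|-1\}$), and choose $k=n-1$. Then for every $c\in A$, the integer $d=k-c=n-1-c$ lies in $\{0,1,\ldots,n-1\}\subseteq B$, so each of the $n$ elements of $A$ contributes a pair $(c,n-1-c)$ to $[(0,n-1)]_{n-1}$. By the injectivity of the first-coordinate projection, these $n$ pairs are distinct, so $|[(0,n-1)]_{n-1}|=n$.

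The two halves together show that the maximum possible size of a compatibility class in $A\times B$, taken over all admissible choices of $A$ and $B$ of the prescribed cardinalities, equals $n=\min(|A|,|B|)$, which is precisely the cardinality of a saturated class by Definition. The only conceptually delicate point, and what I would flag as the main obstacle, is the distinction already made in the paragraph preceding the proposition between a \emph{maximal} compatibility class (largest within a fixed $A\times B$) and a \emph{saturated} compatibility class (largest over all possible $A\times B$ with the given cardinalities); the proof must address both directions of the inequality so that the bound $n$ is not only a universal ceiling but also attainable.
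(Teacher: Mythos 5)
Your proof is correct, and it is more complete than the one in the paper. For the upper bound the two arguments are essentially the same observation dressed differently: the paper arranges $A+\{b_j\}$ into $n$ rows with equal sums aligned in columns, so that each column (compatibility class) meets each row at most once and hence has at most $n$ entries; your injectivity of the first- and second-coordinate projections is a cleaner and more symmetric way of saying the same thing, and it delivers both bounds $|[(a,b)]_k|\le|A|$ and $|[(a,b)]_k|\le|B|$ at once rather than only the one corresponding to the smaller set. Where you genuinely depart from the paper is the second half: the paper stops after the ceiling ``a compatibility class can have a maximum of $n$ elements'' and never shows the bound is attained, whereas you exhibit $A=\{0,1,\ldots,n-1\}$, $B\supseteq\{0,1,\ldots,n-1\}$, $k=n-1$ with a class of exactly $n$ pairs. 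Given that the paper defines a saturated class as one containing the \emph{maximum possible} number of elements, your attainability construction is needed for the stated value $n$ to be the right one rather than merely an upper estimate, so your proof closes a real gap. You are also right to flag the delicate point that for a fixed pair $(A,B)$ a saturated class need not exist at all (e.g.\ $A=\{0,1\}$, $B=\{0,10\}$ has only trivial classes), so the proposition is best read as identifying the universal ceiling together with its attainability over suitable choices of $A$ and $B$ --- exactly the reading your two-sided argument supports.
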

\begin{proof}
Let $A=\{a_1,a_2,a_3, \ldots, a_m\}$ and $B=\{b_1,b_2,b_3, \ldots, b_n\}$ be two sets with $m>n$. Arrange the elements of $A+B$ in rows and columns as follows. 

Arrange the elements of $A+\{b_j\}$ in $j$-th row in such a way that equal sums in each row comes in the same column. Hence, we have $n$ rows in this arrangement in which each column corresponds to a compatibility class in $(A,B)$. Therefore, a compatibility class can have a maximum of $n$ elements.
\end{proof}

\begin{definition}\label{D-CI}{\rm 
The number of distinct compatibility classes in $A\times B$ is called the {\em compatibility index} of the pair of sets $(A,B)$ and is denoted by $\mho_{(A,B)}$.}
\end{definition}

\begin{proposition}\label{L-2}
The compatibility relation `$\sim$' on $A\times B$ is an equivalence relation.
\end{proposition}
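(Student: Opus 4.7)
The plan is to verify the three defining properties of an equivalence relation — reflexivity, symmetry, and transitivity — directly from Definition \ref{D-CP}. The observation that drives the entire argument is that compatibility of two pairs is defined by the equality of their coordinate sums, and equality on $\mathbb{N}_0$ (indeed on any set) is itself an equivalence relation; so the compatibility relation $\sim$ is essentially the pullback of $=$ under the map $(a,b)\mapsto a+b$ from $A\times B$ to $\mathbb{N}_0$, and such a pullback automatically inherits the three properties.

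Concretely, I would proceed as follows. First, for reflexivity, I fix any $(a,b)\in A\times B$ and note that $a+b=a+b$, hence $(a,b)\sim(a,b)$. Second, for symmetry, I assume $(a,b)\sim(c,d)$, so $a+b=c+d$; since equality is symmetric, $c+d=a+b$ and therefore $(c,d)\sim(a,b)$. Third, for transitivity, I assume $(a,b)\sim(c,d)$ and $(c,d)\sim(e,f)$, which translates to $a+b=c+d$ and $c+d=e+f$; combining these two equalities gives $a+b=e+f$, so $(a,b)\sim(e,f)$.

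There is no real obstacle here — the argument is a direct unwinding of Definition \ref{D-CP} together with elementary properties of equality of non-negative integers. The only thing to be slightly careful about is notational: the elements of $A\times B$ are ordered pairs, and I should make it clear that I am genuinely comparing pairs via the auxiliary numerical quantity $a+b$, not via some coordinate-wise equality. Once that is made explicit, the three verifications each reduce to a single line, and concatenating them yields the proposition.
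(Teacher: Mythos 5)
Your proof is correct: the paper states this proposition without supplying a proof, and your direct verification of reflexivity, symmetry, and transitivity from Definition \ref{D-CP} (viewing $\sim$ as the pullback of equality under $(a,b)\mapsto a+b$) is exactly the intended, standard argument.
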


The relation between the set-indexing number of an edge of a graph $G$ and the compatibility index of the pair of set-labels of its end vertices is established in the following lemma.
 
\begin{lemma}\label{L-3}
Let $f$ be an IASI of a graph $G$ and $u,v$ be two vertices of $G$. Then, $g_f(uv)= f(u)+f(v)=\{a+b:a\in f(u), b\in f(v)\}$. Then, the set-indexing number of the edge $uv$ is $|g_f(uv)| = \mho_{(f(u),f(v))}$.
\end{lemma}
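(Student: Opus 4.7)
The plan is to exhibit an explicit bijection between the sumset $g_f(uv) = f(u)+f(v)$ and the collection of compatibility classes of $f(u)\times f(v)$, and then read off the equality of cardinalities.

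First I would set $A = f(u)$ and $B = f(v)$, so that $g_f(uv) = A+B = \{a+b : a\in A,\, b \in B\}$ by definition of an IASI. By Proposition \ref{L-2}, the relation $\sim$ on $A\times B$ is an equivalence relation, so $A\times B$ is partitioned into its compatibility classes $\mathsf{C}_k = [(a,b)]_k$, one for each value $k = a+b$ actually attained. The number of such classes is by definition $\mho_{(A,B)}$.

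Next I would define $\varphi : A+B \to \{\mathsf{C}_k : k \in A+B\}$ by $\varphi(k) = \mathsf{C}_k$. This map is well defined because every element $k\in A+B$ is the sum of at least one ordered pair in $A\times B$, so the class $\mathsf{C}_k$ is non-empty. The map is injective since distinct sums $k \neq k'$ yield disjoint compatibility classes (any pair in $\mathsf{C}_k$ has component-sum $k$, not $k'$), and surjective since every compatibility class arises from some pair $(a,b)$ and is labelled by the value $a+b \in A+B$. Hence $\varphi$ is a bijection, giving $|A+B| = \mho_{(A,B)}$, which is exactly the claimed equality $|g_f(uv)| = \mho_{(f(u),f(v))}$.

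There is no real obstacle here; the lemma is essentially a restatement of the fact that the equivalence classes of $\sim$ are the fibres of the addition map $(a,b)\mapsto a+b$ from $A\times B$ onto $A+B$, and the number of non-empty fibres equals the size of the image. The only care needed is to observe that each compatibility class is non-empty (built into Definition \ref{D-CC}) so that $\varphi$ lands in the full collection of classes counted by $\mho_{(A,B)}$.
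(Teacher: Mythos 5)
Your proof is correct and follows essentially the same route as the paper's: the paper observes that each compatibility class $\mathsf{C}_k$ contributes exactly one element $k$ to $f(u)+f(v)$, which is precisely the bijection between sums and classes that you spell out more formally. Your version is simply a more careful statement of the same correspondence.
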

\begin{proof}
By Definition \ref{D-CC}, each $(a,b)\in \mathsf{C}_k$ contributes a single element $k$ to the set $g_f(uv)=f(u)+f(v)$. Therefore, the number of elements in the set $g_f(uv)$ is the number of compatibility classes $\mathsf{C}_k$ in $f(u)\times f(v)$. Hence, $|g_f(uv)| = \mho_{(f(u),f(v))}$.
\end{proof}

Let $r_k$ be the number of elements in a compatibility class $\mathsf{C}_k$. From Lemma \ref{L-3}, we note that only one representative element of each compatibility class $\mathsf{C}_k$ of $f(u)\times f(v)$ contributes an element to the set-label of the edge $uv$ and all other $r_k-1$ elements are neglected and hence we may call these elements {\em neglecting elements} of $\mathsf{C}_k$. The number of such neglecting elements in a compatibility class $\mathsf{C}_k$ may be called the {\em neglecting number} of that class. Then, the neglecting number of $f(u)\times f(v)$, denoted by $r$, is given by $r = \sum{r_{k_1}}$, where the sum varies over distinct compatibility classes in $f(u)\times f(v)$. Hence, we have the following theorem.

\begin{theorem}\label{T-Card}
Let $f$ be an IASI of a graph $G$ and let $u$ and $v$ be two vertices of $G$. Let $|f(u)|=m$ and $|f(v)|=n$. Then, $g_f(uv)=mn-r$, where $r$ is the neglecting number of $f(u)\times f(v)$. 
\end{theorem}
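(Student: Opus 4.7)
The plan is to combine Lemma \ref{L-3} with the fact (Proposition \ref{L-2}) that compatibility is an equivalence relation on $f(u)\times f(v)$, and then do a simple double-counting of the $mn$ pairs in $f(u)\times f(v)$.

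First I would note that since $|f(u)|=m$ and $|f(v)|=n$, the Cartesian product $f(u)\times f(v)$ contains exactly $mn$ ordered pairs. Because $\sim$ is an equivalence relation, the distinct compatibility classes $\mathsf{C}_{k_1},\mathsf{C}_{k_2},\ldots,\mathsf{C}_{k_t}$ partition $f(u)\times f(v)$, where $t=\mho_{(f(u),f(v))}$. Consequently
\[
mn \;=\; \sum_{i=1}^{t} r_{k_i},
\]
where $r_{k_i}=|\mathsf{C}_{k_i}|$.

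Next I would read off the two contributions to this sum. By the discussion preceding the theorem, each class $\mathsf{C}_{k_i}$ contributes exactly one representative to the sumset $g_f(uv)=f(u)+f(v)$ and exactly $r_{k_i}-1$ neglecting elements. Summing the representatives over all classes gives $t=\mho_{(f(u),f(v))}$, which by Lemma \ref{L-3} equals $|g_f(uv)|$; summing the neglecting counts gives the neglecting number $r=\sum_{i=1}^{t}(r_{k_i}-1)$. Plugging these into the displayed identity yields $mn = |g_f(uv)| + r$, i.e.\ $|g_f(uv)|=mn-r$, which is the desired conclusion (interpreting the statement's left-hand side as the set-indexing number of the edge $uv$).

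I do not expect any real obstacle; the only subtle point is purely a bookkeeping one, namely making explicit that the sum defining $r$ is taken over distinct compatibility classes (so that each class is counted once) and that the correct per-class contribution is $r_{k_i}-1$ rather than $r_{k_i}$. Once that is stated cleanly, the theorem reduces to the partition identity above.
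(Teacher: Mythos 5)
Your argument is correct and is essentially the paper's own: the theorem is stated there as an immediate consequence of the preceding discussion, which is exactly your partition-and-count of $f(u)\times f(v)$ into compatibility classes via Lemma \ref{L-3} and Proposition \ref{L-2}. Your version is actually cleaner, since it makes explicit that the per-class contribution to $r$ is $r_{k}-1$ (the paper's displayed formula $r=\sum r_{k_1}$ is a typo for $\sum (r_k-1)$) and that the left-hand side of the identity should be read as $|g_f(uv)|$.
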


We rewrite Lemma \ref{L-0} as follows and provide an alternate proof for it.

\begin{lemma}\label{L-1}
\cite{GS1} Let $f$ be an IASI on $G$, then for the vertices $u,v$ of $G$, $max(|f(u)|,|f(v)|)\le |g_f(uv)|=|f(u)+f(v)|\le |f(u)|~|f(v)|$.
\end{lemma}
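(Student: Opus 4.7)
The plan is to derive both inequalities from the machinery just developed, rather than from direct arithmetic on sums. By Theorem \ref{T-Card}, $|g_f(uv)| = mn - r$ where $m=|f(u)|$, $n=|f(v)|$, and $r$ is the neglecting number of $f(u)\times f(v)$, i.e.\ the sum $\sum (r_k-1)$ over the distinct compatibility classes $\mathsf{C}_k$ of $f(u)\times f(v)$.

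For the upper bound, I would simply observe that each $r_k\ge 1$ by Definition \ref{D-CC}, so $r_k-1\ge 0$ and hence $r\ge 0$. Substituting into the formula from Theorem \ref{T-Card} gives $|g_f(uv)| = mn - r \le mn = |f(u)|\,|f(v)|$. Equality occurs precisely when every compatibility class is trivial.

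For the lower bound, I would invoke Proposition \ref{P-CardCC}: every compatibility class of $f(u)\times f(v)$ has cardinality at most $\min(m,n)$. Since the compatibility relation $\sim$ is an equivalence relation on $f(u)\times f(v)$ (Proposition \ref{L-2}), its classes partition this $mn$-element set. Therefore the number of classes $\mho_{(f(u),f(v))}$ must satisfy
\[
\mho_{(f(u),f(v))} \cdot \min(m,n) \;\ge\; mn,
\]
which yields $\mho_{(f(u),f(v))} \ge mn/\min(m,n) = \max(m,n)$. Combining this with Lemma \ref{L-3}, which identifies $|g_f(uv)|$ with $\mho_{(f(u),f(v))}$, gives $|g_f(uv)| \ge \max(|f(u)|,|f(v)|)$.

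The only mildly delicate point is the lower bound: one needs to be careful that the partition argument is valid, i.e.\ that the compatibility classes really cover all of $f(u)\times f(v)$ disjointly. This is immediate from Proposition \ref{L-2}, so no real obstacle arises. The whole proof is essentially a two-line consequence of Theorem \ref{T-Card} and Proposition \ref{P-CardCC}, which is the point of presenting this as an ``alternate proof'' in the framework of compatibility classes.
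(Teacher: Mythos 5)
Your proof is correct, and for the lower bound it takes a genuinely different --- and in fact sounder --- route than the paper. The upper bound is handled identically in both: $r\ge 0$ gives $|g_f(uv)|=mn-r\le mn$ via Theorem \ref{T-Card}. For the lower bound, you combine Proposition \ref{L-2} (the compatibility classes partition the $mn$-element set $f(u)\times f(v)$) with Proposition \ref{P-CardCC} (each class has at most $\min(m,n)$ elements) to get a pigeonhole estimate $\mho_{(f(u),f(v))}\ge mn/\min(m,n)=\max(m,n)$, and then invoke Lemma \ref{L-3}. The paper instead assumes $|g_f(uv)|=m$, manipulates $mn-r=m$ into the divisibility claims $n\mid r$ and $(n-1)\mid r$, and concludes $r=0$ and $n=1$; this argument is problematic ($mn=m+r$ gives $r=m(n-1)$, which need not be divisible by $n$, and even granting the divisibilities one only gets $n(n-1)\mid r$, not $r=0$), and at best it analyzes the equality case rather than proving the inequality for all $r$. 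So your counting argument is not just an alternative: it is the argument that actually makes the ``alternate proof via compatibility classes'' work, and it has the added benefit of exhibiting exactly when each bound is attained (all classes trivial for the upper bound, all classes saturated for the lower one).
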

\begin{proof}
Let $|f(u)|=m$ and $|f(v)|=n$. Since $0\le r<mn$, by Observation \ref{T-Card}, $|g_f(uv)|= |f(u)|+|f(v)|\le mn=|f(u)|~|f(v)|$. 

Now, without loss of generality, let $|g_f(uv)|=m$. Then, by Observation \ref{T-Card}, 
\begin{align*}
mn-r=m ~ \Rightarrow  mn=m+r ~ \Rightarrow n|r. \tag{\theequation 1} \label{myeqn1} \\
\text{Also,} ~ mn-m = r ~ \Rightarrow ~ m(n-1)=r ~ \Rightarrow ~ (n-1)|r. \tag{\theequation 2} \label{myeqn2}
\end{align*}

Since $r$ can assume any non-negative integers less than $mn$ and $n$ is a positive integer such that $n|r$ and $(n-1)|r$, we have $r=0$. Hence, by Equation \ref{myeqn2}, $m(n-1)=0 \Rightarrow (n-1)=0$, since $m\ne 0$. Hence, $n=1$. Hence, we have $max(|f(u)|,f(v)|)$. Let $|g_f(uv)|=n$. Therefore, proceeding as above, $m=1$. Combining the above conditions,  $max(|f(u)|,|f(v)|)\le |g_f(uv)|\le |f(u)| |f(v)|$.
\end{proof}

Now, we recall the following definitions.

\begin{definition}\label{D-WIASI}{\rm
\cite{GS1} An IASI $f$ is said to be a {\em weak IASI} if $|g_f(uv)|=max(|f(u)|,|f(v)|)$ for all $u,v\in V(G)$.   A graph which admits a weak IASI may be called a {\em weak IASI graph}. A weak  IASI is said to be {\em weakly uniform IASI} if $|g_f(uv)|=k$, for all $u,v\in V(G)$ and for some positive integer $k$.}
\end{definition}

\begin{definition}\label{D-SIASI}{\rm
\cite{GS2} An IASI $f$ is said to be a {\em strong IASI} if $|g_f(uv)|=|f(u)| |f(v)|$ for all $u,v\in V(G)$. A graph which admits a  strong IASI may be called a {\em strong IASI graph}. A  strong  IASI is said to be  {\em strongly uniform IASI} if $|g_f(uv)|=k$, for all $u,v\in V(G)$ and for some positive integer $k$.}
\end{definition}

Form the above definitions we have the following proposition.

\begin{proposition}
If $f$ is a weak (or strong) IASI defined on a graph $G$, then for each adjacent pair of vertices $u$ and $v$ of $G$, each compatibility class of the pair of set-labels $f(u)$ and $f(v)$ is a trivial class.
\end{proposition}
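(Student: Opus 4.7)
The plan is to treat the strong and weak cases separately after a common counting setup. Fix an edge $uv$ of $G$ and write $m=|f(u)|$, $n=|f(v)|$. By Proposition \ref{L-2} the compatibility relation is an equivalence relation, so the compatibility classes partition the $mn$ ordered pairs of $f(u)\times f(v)$ into non-empty blocks; and by Lemma \ref{L-3} the number of such blocks equals $|g_f(uv)|$. This reduces the proposition to showing that in both the weak and strong situations the number of blocks equals the total number of ordered pairs.

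For a strong IASI the hypothesis $|g_f(uv)|=mn$ is immediate: the $mn$ ordered pairs are partitioned into $mn$ non-empty classes, which forces each class to contain exactly one pair, so every class is trivial. For a weak IASI the same counting only produces $\max(m,n)$ classes, so extra work is needed; I would first reduce to the case $\min(m,n)=1$. Using Theorem \ref{T-Card}, the neglecting number satisfies $r=mn-\max(m,n)=\max(m,n)\cdot(\min(m,n)-1)$, and running the divisibility argument used in the proof of Lemma \ref{L-1} (or, more cleanly, invoking the classical lower bound $|A+B|\ge |A|+|B|-1$ for finite sets of non-negative integers) forces $\min(m,n)=1$. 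Assuming without loss of generality $n=1$ and writing $f(v)=\{b\}$, the sums $a+b$ for $a\in f(u)$ are pairwise distinct, so each ordered pair $(a,b)$ lies in its own compatibility class; hence every class is trivial.

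The main obstacle is the weak case: the proposition becomes essentially a one-line counting argument once one knows that $\min(|f(u)|,|f(v)|)=1$ for every edge of a weak IASI graph, but that intermediate fact is not isolated as a separate lemma in the paper. Extracting it carefully from the (somewhat telegraphic) proof of Lemma \ref{L-1}, or re-proving it from the sumset inequality, is the step that requires the most care; everything beyond that reduction is immediate from the partition count above.
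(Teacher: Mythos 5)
Your proposal is correct, and it supplies something the paper does not: the paper states this proposition with no proof at all, presenting it as an immediate consequence of the definitions. Your counting setup (classes partition the $mn$ pairs of $f(u)\times f(v)$, and by Lemma \ref{L-3} the number of classes is $|g_f(uv)|$, so triviality of every class is equivalent to $|g_f(uv)|=mn$) is exactly the right frame, and it shows that the proposition really is one line for the strong case but genuinely not immediate for the weak case. You are also right that the missing ingredient in the weak case is the fact that $\min(|f(u)|,|f(v)|)=1$ on every edge of a weak IASI graph. One caution: do not lean on the divisibility argument in the paper's proof of Lemma \ref{L-1} to get this, because that argument is unsound as written (from $mn=m+r$ with $r=m(n-1)$ one gets $n\mid r$ only when $n\mid m$, and in any case $n\mid r$ together with $(n-1)\mid r$ yields $n(n-1)\mid r$, not $r=0$). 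Your alternative route via the classical sumset bound $|A+B|\ge |A|+|B|-1$ is the correct and clean one: it gives $\max(m,n)=|f(u)+f(v)|\ge m+n-1$, hence $\min(m,n)=1$, after which each pair $(a,b)$ with $b$ the unique element of the singleton label has its own class. So your write-up is not merely a restyling of the paper's (nonexistent) argument; it is the proof the paper should have included, and the strong/weak asymmetry you flag is a real feature of the statement, not an artifact of your approach.
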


Do the of the associated graphs, mentioned of above, of a given weak IASI graph admit weak IASI? In the following section, we provide some results regarding this problem.

\subsection{Associated Graphs of Weak IASI graphs}

In \cite{GS3}, it is proved that the graph $G\circ e$ admits a weak IASI if and only if it is bipartite or $e$ is a mono-indexed edge. It is also proved in \cite{GS3} that a graph which is obtained from a cycle $C_n$ by applying finite number of elementary topological reduction on $C_n$ admits a weak IASI. The question we need to address here is whether this result could be extended to any weak IASI graph. The following theorem provides a solution to this problem.

\begin{theorem}
Let $G$ be a weak IASI graph. Let a vertex $v$ of $G$ with $d(v)=2$, which is not in a triangle, and $u$ and $w$ be the adjacent vertices of $v$. Then, the graph $H=(G-v)\cup \{uw\}$ admits a weak IASI if and only if $v$ is not mono-indexed or at least one of the edges $uv$ or $vw$ is mono-indexed. 
\end{theorem}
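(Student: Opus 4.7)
The plan is to leverage a key structural consequence of the weak IASI condition, namely that if $f$ is a weak IASI on $G$ and $xy \in E(G)$, then at least one of $f(x), f(y)$ is a singleton (equivalently $\min(|f(x)|, |f(y)|) = 1$). This fact can be extracted from the argument used in the proof of Lemma \ref{L-1}: if $|f(x)| = m \ge n = |f(y)|$ and $|f(x)+f(y)| = m$, then the $n$ translates $f(x)+b_j$, $b_j \in f(y)$, all coincide as subsets of $\mathbb{N}_0$, and shift-invariance of a finite nonempty subset of $\mathbb{N}_0$ forces the translations to be zero, so $n = 1$.

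With this characterization in hand, let $f'$ denote the restriction of $f$ to $V(H) = V(G) \setminus \{v\}$. The triangle-free hypothesis on $v$ guarantees $uw \notin E(G)$, so $uw$ is the unique edge of $H$ not inherited from $G$, and every other edge of $H$ satisfies the weak IASI condition under $f'$ automatically. The entire question therefore reduces to whether the edge $uw$ satisfies $|f(u)+f(w)| = \max(|f(u)|, |f(w)|)$, which by the structural fact holds if and only if at least one of $u, w$ is mono-indexed.

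For the sufficiency direction I would split into two subcases. If $v$ is not mono-indexed, then $|f(v)| \ge 2$, and applying the structural fact to each of the edges $uv$ and $vw$ forces $|f(u)| = 1$ and $|f(w)| = 1$; in particular, $uw$ is weak IASI under $f'$. If instead $v$ is mono-indexed but, say, $uv$ is mono-indexed, then $|f(u)+f(v)| = 1$ combined with $|f(v)| = 1$ yields $|f(u)| = 1$, and once more $uw$ is weak IASI; the subcase where $vw$ is mono-indexed is symmetric.

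For necessity I would argue contrapositively: suppose $v$ is mono-indexed while neither $uv$ nor $vw$ is mono-indexed. Since $|f(v)| = 1$, the sumsets $f(u)+f(v)$ and $f(v)+f(w)$ are simply shifts of $f(u)$ and $f(w)$, so $|f(u)| = |f(u)+f(v)| \ge 2$ and similarly $|f(w)| \ge 2$. The structural characterization then forces $|f(u)+f(w)| > \max(|f(u)|, |f(w)|)$, so the new edge $uw$ violates the weak IASI condition and $f'$ fails to be a weak IASI on $H$. The main obstacle is really just securing that initial structural observation about weak IASI edges; once it is in place the rest is routine case analysis, and the triangle-free hypothesis is used only to ensure that $uw$ is a genuinely new edge of $H$.
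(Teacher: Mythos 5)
Your proof is correct and follows essentially the same route as the paper's: both arguments reduce the question to whether the new edge $uw$ has a mono-indexed end vertex and then run the same case analysis on whether $v$ and the edges $uv$, $vw$ are mono-indexed. You are in fact more careful than the paper, since you explicitly state and justify the key structural fact that $|A+B|=\max(|A|,|B|)$ forces $\min(|A|,|B|)=1$ via the translate argument, a fact the paper invokes only implicitly.
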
 
\begin{proof}
First assume that $H=(G-v)\cup \{uw\}$ admits a weak IASI, say $f'$, defined as in Theorem \ref{T-IASI-TRG}. Hence, either $u$ or $v$ is mono-indexed in $G$. If both $u$ and $v$ are mono-indexed, then $v$ can have a non-singleton set-label. If exactly one of them is mono-indexed, then $v$ must be mono-indexed. Hence one of the edges $uv$ or $vw$ is mono-indexed.

Conversely, if $v$ is not mono-indexed, the vertices $u$ and $w$ must be mono-indexed. Hence, the edge $uw$ is mono-indexed in $H$. If $v$ is mono-indexed, by hypothesis one the edges $uv$ and $vw$ is mono-indexed. Then, either $u$ or $w$ is mono-indexed. Hence, $H$ admits a weak ISI.  
\end{proof}

We know that the adjacency of two vertices in the line graph $L(G)$ corresponds to the adjacency of corresponding edges in the graph $G$. A vertex in $L(G)$ is mono-indexed if the corresponding edge in $G$ is mono-indexed. Hence, we have the following theorem.

\begin{theorem}\label{T-WIASI-LG}
The line graph $L(G)$ of a weak IASI graph $G$ admits a weak IASI if and only if at least one edge in each pair of adjacent edges in $G$, is mono-indexed in $G$. 
\end{theorem}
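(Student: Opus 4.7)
The plan is to work with the canonical induced labeling $f'$ on $L(G)$ from the proof of Theorem~\ref{T-IASI-LG}, namely $f'(u_i)=g_f(e_i)$, where $u_i\in V(L(G))$ corresponds to $e_i\in E(G)$. The remark preceding the theorem --- that $u_i$ is mono-indexed in $L(G)$ if and only if $e_i$ is mono-indexed in $G$ --- ties the cardinality of $f'(u_i)$ directly to that of $g_f(e_i)$, so the weak IASI condition on $L(G)$ translates into a cardinality condition on the edge-labels of $G$.

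For the forward direction, I would take an arbitrary edge $u_iu_j$ of $L(G)$, which corresponds by construction to a pair of adjacent edges $e_i,e_j$ of $G$. The weak IASI hypothesis on $L(G)$ requires $|f'(u_i)+f'(u_j)|=\max(|f'(u_i)|,|f'(u_j)|)$. Lemma~\ref{L-1} states $\max(|A|,|B|)\le |A+B|\le |A||B|$ for any $A,B\subseteq \mathbb{N}_0$, and the argument inside its proof shows that equality $|A+B|=\max(|A|,|B|)$ forces $\min(|A|,|B|)=1$. Applying this to $A=f'(u_i)$ and $B=f'(u_j)$ yields that one of these sets is a singleton, which is to say that one of $g_f(e_i),g_f(e_j)$ is a singleton and at least one of $e_i,e_j$ is mono-indexed in $G$.

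For the converse, I would assume that every pair of adjacent edges in $G$ contains a mono-indexed edge. Fixing an arbitrary edge $u_iu_j$ of $L(G)$, by hypothesis one of $g_f(e_i),g_f(e_j)$ is a singleton, say $g_f(e_i)=\{a\}$. Then $f'(u_i)+f'(u_j)=\{a\}+g_f(e_j)$ is a translate of $g_f(e_j)$ and so has cardinality $|g_f(e_j)|=\max(|f'(u_i)|,|f'(u_j)|)$. Since this equality holds on every edge of $L(G)$, $f'$ is a weak IASI of $L(G)$.

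The only genuinely subtle point is the reading of ``admits a weak IASI'' in the forward direction: the statement is most naturally understood as referring to this canonical induced IASI $f'$, since an arbitrary alternative labeling of $L(G)$ would sever the link with the edge-labels of $G$ and make the equivalence vacuous. I would flag this convention explicitly at the start of the proof; with it fixed, both directions reduce cleanly to the singleton characterization of equality in Lemma~\ref{L-1}, and no further combinatorial work is needed.
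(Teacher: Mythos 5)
Your proposal is correct, and it supplies in full the argument that the paper only gestures at: the paper states this theorem with no proof, deriving it directly from the preceding remark that a vertex of $L(G)$ is mono-indexed exactly when the corresponding edge of $G$ is. Your two directions --- equality $|A+B|=\max(|A|,|B|)$ forcing one of the sets to be a singleton (via the sumset bound underlying Lemma~\ref{L-1}), and a singleton label producing a translate in the converse --- are exactly the intended reasoning, worked out carefully. Your flag about the ambiguity of ``admits a weak IASI'' (the induced labeling $f'$ versus an arbitrary relabeling of $L(G)$) is a genuine and worthwhile observation, since the forward implication only holds under the convention you state.
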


The following theorem verifies the admissibility of Weak IASI by the total graph of a weak IASI graph.  

\begin{theorem}\label{T-WIASI-TG}
The total graph $T(G)$ of a weak IASI graph $G$ admits a weak IASI if and only if the IASI defined on $G$ is $1$-uniform. 
\end{theorem}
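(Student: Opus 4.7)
The plan is to use the fundamental fact that for any weak IASI $f$ defined on a graph, every edge $uv$ must satisfy $\min(|f(u)|,|f(v)|)=1$; equivalently, at least one endpoint of every edge is mono-indexed. This follows from Lemma \ref{L-3} together with Proposition \ref{P-CardCC}: the compatibility class containing the minimum sum $\min f(u)+\min f(v)$ is always trivial, so if both labels had cardinality at least $2$, the remaining $|f(u)||f(v)|-1$ pairs could not fit into only $\max(|f(u)|,|f(v)|)-1$ classes of size at most $\min(|f(u)|,|f(v)|)$, forcing $|g_f(uv)|>\max(|f(u)|,|f(v)|)$ and contradicting weakness.

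For the sufficiency direction, suppose $f$ is a $1$-uniform weak IASI on $G$. Then $|g_f(uv)|=1$ for every edge of $G$, and Lemma \ref{L-0} forces $|f(u)|=|f(v)|=1$ for every such edge. Since $G$ has no isolated vertices, every vertex of $G$ is mono-indexed and every edge set-label is a singleton. Consequently the induced labeling $f'$ on $T(G)$ from Theorem \ref{T-IASI-TG} assigns a singleton to every vertex of $T(G)$, and $|g_{f'}(xy)|=1=\max(|f'(x)|,|f'(y)|)$ on every edge $xy$ of $T(G)$, so $f'$ is a weak IASI.

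For the necessity direction, suppose the induced IASI $f'$ on $T(G)$ is a weak IASI. Pick any $u\in V(G)$; since $G$ has no isolated vertices, there is some edge $e=uy$ incident to $u$. In $T(G)$ the vertex coming from $u$ and the vertex coming from $e$ are adjacent, with labels $f(u)$ and $f(u)+f(y)$ respectively. Applying the key fact of the first paragraph to this edge of $T(G)$, one of $|f(u)|$ or $|f(u)+f(y)|$ equals $1$; but $|f(u)+f(y)|\ge|f(u)|$ by Lemma \ref{L-0}, so $|f(u)|=1$ in either case. Since $u$ was arbitrary, every vertex of $G$ is mono-indexed, and hence $|g_f(uv)|=1$ for every edge of $G$, i.e., $f$ is $1$-uniform.

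The main obstacle is the first step: the paper's explicitly stated compatibility-class lemmas give only the lower bound $|g_f(uv)|\ge\max(|f(u)|,|f(v)|)$, so showing that weakness forces a mono-indexed endpoint requires the small additional argument pinpointing a trivial class (namely the one containing the minimum sum). Once that fact is in hand, the remainder of the proof is driven entirely by the vertex--edge incidences built into $T(G)$; the vertex--vertex and edge--edge adjacencies of $T(G)$ impose no further constraints beyond those already forced by this coupling between vertex labels and edge labels of $G$.
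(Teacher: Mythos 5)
Your proof is correct, and it is considerably more precise than the paper's own argument, which is a verbal sketch: the paper appeals to Theorem \ref{T-WIASI-LG} and runs through the three kinds of adjacency in $T(G)$ (vertex--vertex, edge--edge, vertex--edge) without ever isolating the fact that does all the work. That fact is exactly the one you prove in your first paragraph --- a weak IASI forces $\min(|f(u)|,|f(v)|)=1$ on every edge --- and your counting argument (the class of the minimum sum is trivial, so $mn-1$ pairs cannot fit into $m-1$ classes of size at most $n$ unless $n=1$) is a clean derivation of it from Proposition \ref{P-CardCC} and Lemma \ref{L-3}; the paper never supplies this, implicitly importing it from \cite{GS1}/\cite{GS3}. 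Your two directions are then driven entirely by the vertex--edge incidence adjacencies of $T(G)$, and you correctly observe that the inequality $|f(u)+f(y)|\ge |f(u)|$ collapses both cases of the dichotomy to $|f(u)|=1$, which is the step the paper's proof gestures at with ``both an edge and the vertices it incidents on are mono-indexed.'' What your route buys is a self-contained argument that identifies the incidence adjacencies as the binding constraint; what the paper's route would buy, if made precise, is reuse of the line-graph criterion for the edge--edge adjacencies, which your proof shows is actually unnecessary for this theorem. One caveat you share with the paper: both treatments take for granted that the induced labeling $f'$ on $T(G)$ remains injective (on vertices and on edges), deferring to Theorem \ref{T-IASI-TG}, where that point is itself asserted rather than proved; since you flag that you are using the induced labeling from that theorem, this is not a gap relative to the paper's own standard.
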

\begin{proof}
We note that the adjacency between the vertices are preserved in the total graph. That is, the vertices of $T(G)$ corresponding to the vertices of $G$ preserve the same adjacency relations as the corresponding vertices have in $G$. By Theorem \ref{T-WIASI-LG}, the vertices in $T(G)$ corresponding to the edges in $G$ has adjacency between them subject to the condition that at least one edge in each pair of adjacent edges in $G$ is mono-indexed in $G$. The vertices in $T(G))$, that keeps adjacency corresponding to the incidence relation in $G$, hold the condition of a weak IASI that one of the adjacent vertices is mono-indexed if and only if both an edge and the vertices it incidents on are mono-indexed. This completes the proof.
\end{proof}

Figure \ref{G-WIASIG-TG} depicts the admissibility of an IASI by the total graph $T(G)$ of a weakly IASI graph $G$.

\begin{figure}[h!]
\centering
\includegraphics[scale=0.4]{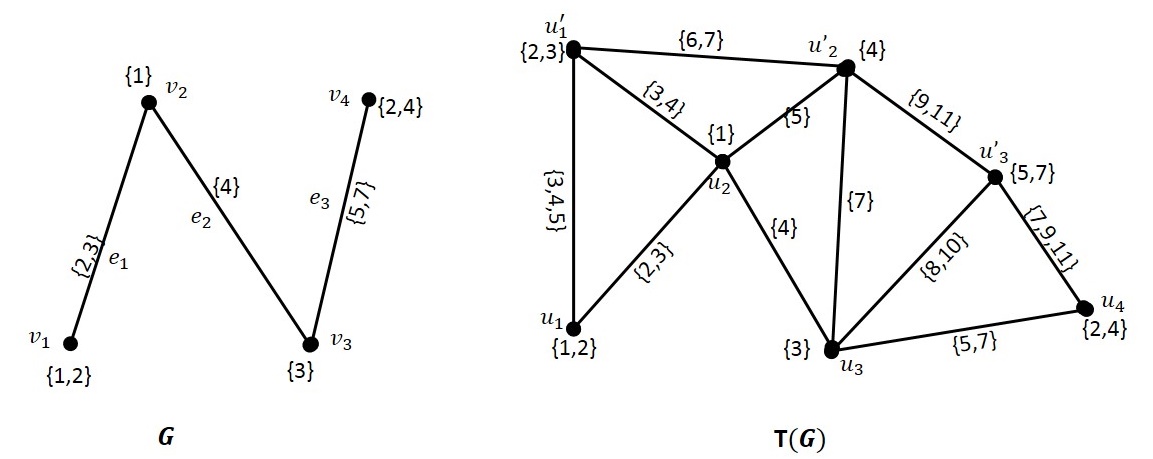}
\caption{}\label{G-WIASIG-TG}
\end{figure}

\begin{remark}{\rm
Form the above theorems, we observe that the line graph and total graph of a weakly uniform IASI graph do not admit weakly uniform IASIs.}
\end{remark}

\subsection{Associated Graphs of Strong IASI graphs}

In \cite{GS2}, a graph which is obtained from a strong IASI graph $G$ by applying finite number of elementary topological reduction on $C_n$ admits a strong IASI if and only if there exists a path $P_2$, not in a cycle, with the difference sets of set-labels of all its vertices are disjoint.

Does the line graph of a strong IASI graph admit a strong IASI? We get the answer for this question from the following theorem.

\begin{theorem}\label{T-SIASI-LG}
The line graph of a strong IASI graph does not admit a strong IASI.
\end{theorem}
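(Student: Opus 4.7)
The plan is to show that the natural induced labeling on $L(G)$, namely $f'(u_i) = g_f(e_i)$ from Theorem \ref{T-IASI-LG}, cannot satisfy the strong condition whenever $G$ has two adjacent edges meeting at a vertex with non-singleton label. I would begin by fixing two adjacent edges $e_i = uv$ and $e_j = vw$ of $G$ sharing a common endpoint $v$, whose corresponding vertices $u_i, u_j$ are adjacent in $L(G)$, and recording that the strong hypothesis on $G$ forces
\[
f'(u_i) = f(u)+f(v), \qquad f'(u_j) = f(v)+f(w),
\]
with $|f'(u_i)| = |f(u)|\,|f(v)|$ and $|f'(u_j)| = |f(v)|\,|f(w)|$.

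The central step is to expand the edge sumset
\[
g_{f'}(u_iu_j) = f'(u_i) + f'(u_j) = f(u) + f(w) + \bigl(f(v) + f(v)\bigr)
\]
and isolate the factor $f(v)+f(v)$ as the source of the cardinality defect. A short elementary observation does the work: for any $B \subseteq \mathbb{N}_0$ with $|B|\ge 2$ the ordered pairs $(b,b')$ and $(b',b)$ collapse to the same sum, so $|B+B| \le \binom{|B|+1}{2} < |B|^2$. Combining this with Lemma \ref{L-0} applied twice gives
\[
|g_{f'}(u_iu_j)| \le |f(u)|\,|f(w)|\,|f(v)+f(v)| < |f(u)|\,|f(v)|^2\,|f(w)| = |f'(u_i)|\,|f'(u_j)|,
\]
so the strong condition necessarily fails at the edge $u_iu_j$ of $L(G)$.

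The main obstacle, in my view, is the degenerate $1$-uniform situation in which every vertex of $G$ has a singleton label: then $|f(v)+f(v)|=|f(v)|^2=1$, the strict inequality above collapses, and $L(G)$ trivially inherits a strong (in fact $1$-uniform) IASI. To make the statement clean I would either implicitly assume that the strong IASI on $G$ is not $1$-uniform (consistent with how the preceding theorems in this section treat induced labelings) or else state the conclusion as: the induced strong IASI on $G$ does not lift to a strong IASI on $L(G)$ as soon as some shared vertex carries a set-label of cardinality at least two. A secondary and easier caveat is that the argument requires $L(G)$ to possess at least one edge, which fails only when $G$ itself consists of at most one edge; those cases are vacuous.
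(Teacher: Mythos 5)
Your argument is correct and reaches the paper's conclusion by a genuinely different, and in fact more self-contained, route. The paper invokes the criterion from \cite{GS2} that a strong IASI forces the difference sets of adjacent set-labels to be disjoint, and then observes that the labels $f(u)+f(v)$ and $f(v)+f(w)$ of two adjacent edges both inherit every difference occurring inside $f(v)$, so their difference sets must intersect. You instead work directly with cardinalities: writing $f'(u_i)+f'(u_j)=f(u)+f(w)+\bigl(f(v)+f(v)\bigr)$ and using the elementary bound $|B+B|\le\binom{|B|+1}{2}<|B|^2$ for $|B|\ge 2$, you obtain a strict deficit against $|f'(u_i)|\,|f'(u_j)|$ with no appeal to difference sets. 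Both arguments isolate the same obstruction --- the doubled factor $f(v)+f(v)$ contributed by the common vertex --- but yours needs only Lemma \ref{L-0} plus a counting observation and actually quantifies the defect, whereas the paper's leans on an external characterisation. Your caveats are well taken and apply equally to the paper's own proof: both arguments only rule out the labeling of $L(G)$ induced from $G$, not an arbitrary strong IASI on $L(G)$, and both silently break down when the shared vertex $v$ is mono-indexed, since then $|f(v)+f(v)|=|f(v)|^2=1$ and a $1$-uniform IASI is vacuously strong. Of your two proposed repairs, the second (requiring some vertex of degree at least two to carry a non-singleton label) is the sharp one; merely assuming the strong IASI on $G$ is not $1$-uniform is not sufficient, since the only non-mono-indexed vertices could be pendant, as in a star whose centre is mono-indexed.
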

\begin{proof}
The line graph of $L(G)$ of a strong IASI graph $G$ admits a strong IASI if for any two adjacent vertices $u_i$ and $u_j$, the difference sets of their set-labels are disjoint. This is possible only when, for the corresponding adjacent edges $e_i$ and $e_j$ in $G$, the maximum of the differences between the elements of the set-label of $e_i$ is less than the minimum of the differences between the elements of the set-label of $e_j$. Since $e_i$ and $e_j$ have a common vertex, this condition will not hold. Hence $L(G)$ does not admit a strong IASI.
\end{proof}

In a similar way,  we establish the following result on total graphs of strong IASI graphs.

\begin{theorem}
The total graph of a strong IASI graph does not admit a strong IASI.
\end{theorem}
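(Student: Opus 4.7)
The plan is to reduce the statement to Theorem \ref{T-SIASI-LG} by exhibiting $L(G)$ as a subgraph of $T(G)$. By the definition of the total graph, the vertices of $T(G)$ corresponding to edges of $G$ are mutually adjacent in $T(G)$ exactly when the corresponding edges of $G$ share a vertex in $G$. Hence the subgraph of $T(G)$ induced on the edge-type vertices is isomorphic to $L(G)$.

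The next observation is that the strong IASI property passes to subgraphs, in the same spirit as Theorem \ref{T-IASI-S} for the general IASI property: if $f$ is a strong IASI on a graph $H$ and $H'$ is a subgraph of $H$, then $f|_{V(H')}$ remains injective, and for every edge $uv$ of $H'$ the identity $|g_f(uv)| = |f(u)|\,|f(v)|$ is inherited from $H$ because the edge $uv$ already lies in $H$. Therefore $f|_{V(H')}$ is a strong IASI on $H'$.

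With these two observations in hand, the proof runs by contradiction. Assume $G$ is a strong IASI graph and that $T(G)$ admits a strong IASI $f$. Restricting $f$ to the edge-type vertices of $T(G)$ produces a strong IASI on the induced subgraph of $T(G)$ isomorphic to $L(G)$, which directly contradicts Theorem \ref{T-SIASI-LG}.

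The only potential obstacle is a degenerate case in which $G$ has no two adjacent edges (for instance, when $G$ is a matching); then $L(G)$ is edgeless and the embedding argument is vacuous. In that case I would fall back on the direct argument used for Theorem \ref{T-SIASI-LG}, but carried out inside $T(G)$: whenever an edge $e$ of $G$ is incident with a vertex $v$ of $G$, the corresponding vertices in $T(G)$ are adjacent, and since the set-label of $e$ under $f$ already contains the contribution of $v$, the difference sets of the set-labels of these two vertices of $T(G)$ cannot be disjoint, so the strong IASI condition $|g_f(\cdot)| = |f(\cdot)|\,|f(\cdot)|$ fails on this edge of $T(G)$.
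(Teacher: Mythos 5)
Your argument reaches the same conclusion as the paper but by a cleaner, more modular route. The paper's proof simply re-runs the difference-set argument of Theorem \ref{T-SIASI-LG} inside $T(G)$: adjacent edge-type vertices of $T(G)$ carry the sumset labels of adjacent edges of $G$, and since those edges share a vertex of $G$ their difference sets cannot be disjoint. You instead observe that the edge-type vertices induce a copy of $L(G)$ in $T(G)$, prove that the strong IASI property is hereditary on subgraphs (correct, and in the spirit of Theorem \ref{T-IASI-S}), and then invoke Theorem \ref{T-SIASI-LG} as a black box. What this buys is modularity and, on its face, more generality: your contradiction applies to an arbitrary strong IASI on $T(G)$, not only to the labeling induced from $G$. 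The caveat is that your reduction needs Theorem \ref{T-SIASI-LG} in its literal, absolute form ($L(G)$ admits no strong IASI whatsoever), whereas the proof the paper gives for that theorem --- and for the present one --- only analyses the particular labeling of $L(G)$ induced by $u_i \mapsto g_f(e_i)$; read in that narrower sense, the restriction of an arbitrary strong IASI on $T(G)$ to the $L(G)$-part need not be that induced labeling, and the contradiction would not follow. Your handling of the degenerate case where $L(G)$ is edgeless (e.g.\ $G$ a matching) addresses a gap the paper does not acknowledge, and the fallback via the vertex--edge incidences of $T(G)$ is the right fix, though it --- like the paper's own argument --- silently assumes that some vertex label is non-singleton, since a $1$-uniform labeling satisfies the strong cardinality condition vacuously.
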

\begin{proof}
As provided in the proof of the Theorem \ref{T-SIASI-LG}, the vertices in $T(G)$ corresponding to the edges in $G$ has adjacency between them subject to the condition that the difference sets of their set-labels are disjoint, which is not possible since these edges have a common vertex in $G$ and the differences between some elements in the set-labels of both edges $e_i$ and $e_j$ are the same as that of some elements of the set-label of the common vertex. Hence, $T(G)$ does not admit a strong IASI.
\end{proof}

\section{IASIs with Finite Ground Sets}

The ground set, we have considered so far for the set-labeling of the elements of graphs is $\mathbb{N}_0$, which is a countably infinite set. In the following results, we consider a finite set $X$ of non-negative integers as the ground set. An interesting question here is about the minimum cardinality of the ground set $X$ so that $G$ admits an IASI with respect to $X$. The following proposition leads us to the solution of this problem.

\begin{theorem}\label{P-FinSet}
Let $X$ be a finite set of non-negative integers and let $f:V(G)\to 2^X-\{\emptyset\}$ be an IASI on $G$, which has $n$ vertices. Then, $X$ has at least $\lceil log_2(n+1)\rceil$ elements, where $\lceil x \rceil$ is the ceiling function of $x$.
\end{theorem}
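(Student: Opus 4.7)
The plan is to extract the bound purely from the injectivity of $f$ together with a cardinality count of the codomain; the additive structure of the IASI and the associated function $g_f$ are not needed for this particular inequality.

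First I would observe that, by the definition of an IASI, the vertex labeling $f:V(G)\to 2^X-\{\emptyset\}$ is injective. Consequently the image $f(V(G))$ is an $n$-element subfamily of $2^X-\{\emptyset\}$, which forces the inequality $n\le |2^X-\{\emptyset\}|$.

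Next I would compute $|2^X-\{\emptyset\}|=2^{|X|}-1$, so that the previous inequality becomes $n\le 2^{|X|}-1$, equivalently $2^{|X|}\ge n+1$. Taking logarithms base $2$ gives $|X|\ge \log_2(n+1)$, and since $|X|$ is a non-negative integer, we may replace the right-hand side by its ceiling to obtain $|X|\ge \lceil \log_2(n+1)\rceil$, as required.

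I do not expect a substantive obstacle here: the statement is a pigeonhole/counting fact that uses only the injectivity clause built into the definition of an IASI, together with the elementary count of non-empty subsets of a finite set. The only small care point is to insist on the non-emptiness restriction in the codomain so that the $-1$ appears (yielding $\log_2(n+1)$ rather than $\log_2 n$) and to note that the integrality of $|X|$ justifies the ceiling.
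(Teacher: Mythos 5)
Your proposal is correct and is essentially the same counting argument as the paper's: injectivity of $f$ forces $n$ distinct non-empty subsets of $X$, hence $2^{|X|}\ge n+1$, and integrality of $|X|$ yields the ceiling. Your write-up is in fact slightly more careful than the paper's in making the injectivity and the integrality step explicit.
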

\begin{proof}
Let $G$ be a graph on $n$ vertices. Then, there must be $n$ non-empty subsets of $X$ to label the vertices of $G$. Therefore, $X$ must have at least $n+1$ subsets including $\emptyset$. That is, $2^{|X|}\ge (n+1)$. Hence, $|X|\ge \lceil log_2(n+1)\rceil$.
\end{proof}

\begin{theorem}\label{P-UniSet}
Let $X$ be a finite set of non-negative integers and let $f:V(G)\to 2^X-\{\emptyset\}$ be an IASI on $G$, which has $n$ vertices, such that $V(G)$ is $l$-uniformly set-indexed. Then, the cardinality of $X$ is given by $\binom{|X|}{l} \ge n$.
\end{theorem}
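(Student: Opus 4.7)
The plan is straightforward counting, essentially parallel to the proof of Theorem \ref{P-FinSet} but restricted to a single layer of the Boolean lattice. First I would unpack the hypothesis: saying that $V(G)$ is $l$-uniformly set-indexed means $|f(v)|=l$ for every $v \in V(G)$, so the image of $f$ lies entirely inside the collection of $l$-element subsets of $X$, namely $\binom{X}{l} := \{S \subseteq X : |S|=l\}$.

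Next I would invoke the injectivity of $f$, which is built into the definition of an IASI (Definition \ref{D2}). Since $f:V(G)\to 2^X-\{\emptyset\}$ is injective and its image is contained in $\binom{X}{l}$, the $n$ vertices of $G$ are mapped to $n$ distinct members of $\binom{X}{l}$. Therefore the cardinality of $\binom{X}{l}$ must be at least $n$, that is,
\[
\binom{|X|}{l} = \left|\binom{X}{l}\right| \ge n,
\]
which is the desired inequality.

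I do not anticipate any real obstacle here: the argument only uses the injectivity of the vertex-labeling $f$ together with the elementary fact that the number of $l$-subsets of an $m$-set is $\binom{m}{l}$; no properties of the induced edge function $g_f$ or of the sumset operation are needed. In particular, this bound is purely set-theoretic and does not depend on whether $G$ is strong, weak, or uniformly IASI. If desired, one could also remark that the bound is sharp whenever $G$ has at most $\binom{|X|}{l}$ vertices, since any injection from $V(G)$ into $\binom{X}{l}$ produces a valid $l$-uniform vertex labeling (although verifying that the induced $g_f$ remains injective would then require choosing $X$ carefully, which is a separate issue).
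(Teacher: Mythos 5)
Your proposal is correct and follows essentially the same route as the paper's own proof: the $l$-uniformity places the image of $f$ inside the family of $l$-element subsets of $X$, and injectivity of the vertex labeling forces that family to contain at least $n$ members, giving $\binom{|X|}{l}\ge n$. Your version is in fact slightly more careful than the paper's, since you explicitly cite the injectivity of $f$ (the paper leaves this implicit) and correctly write $|f(u)|=l$ rather than the paper's typo $f(u)=l$.
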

\begin{proof}
Since $V(G)$ is $l$-uniformly set-indexed, $f(u)=l ~\forall~ u\in V(G)$. Therefore, $X$ must have at least $n$ subsets with $l$-elements. Hence, $\binom{|X|}{l} \ge n$.
\end{proof}

\subsection{Open Problems}

We note that an IASI with a finite ground set $X$ is never a topological set-indexer. Is an IASI set-graceful? We note that all IASIs are not set-graceful in general. The following is an open problem for further investigation.

\begin{problem}
Check whether an IASI $f$ can be a set-graceful labeling. If so, find the necessary and sufficient condition for $f$ to be set-graceful.
\end{problem}

\begin{problem}
Check whether an IASI $f$ can be a set-sequential labeling. If so, find the necessary and sufficient condition for $f$ to be set-sequential.
\end{problem}

\section{Conclusion}

In this paper, we have discussed about integer additive set-indexers of graphs and the admissibility of IASI by certain associated graphs of a given IASI graph. Some studies have also been made certain IASI graphs with regard to the cardinality of the labeling sets of their elements. The characterisation of certain types IASIs in which the labeling sets have definite patterns, are yet to be made. Questions related to the admissibility of different types of IASIs by certain graph classes and graph structures are to be addressed. All these facts highlights a wide scope for future studies in this area.

\end{document}